\DeclareMathOperator*{\argmin}{argmin}
\theoremstyle{definition}%
\newtheorem{theorem}{Theorem}
\newtheorem{lemma}[theorem]{Lemma}
\newtheorem{prop}[theorem]{Proposition}
\newtheorem{cor}[theorem]{Corollary}
\newtheorem{obs}[theorem]{Observation}
\newtheorem{note}[theorem]{Note}
\begin{document}

%%-----------------------------
%%      the top matter
%%-----------------------------
\title{Another pedagogy for mixed-integer Gomory}% At most 5 thanks
\author{Jon Lee}
\address{IOE Department, University of Michigan, Ann Arbor, MI, USA.} \email{jonxlee@umich.edu}

\author{Angelika Wiegele}\address{Institut f\"ur Mathematik, Alpen-Adria-Universit\"at Klagenfurt, Klagenfurt, Austria}
\email{angelika.wiegele@aau.at}

\date{\today}
\begin{abstract}
We present a version of GMI (Gomory mixed-integer) cuts in a way so that they are derived with respect to
 a ``dual form'' mixed-integer optimization problem and applied on the standard-form primal side as columns, using the primal simplex algorithm. This follows the general scheme of He and Lee, who did the case of Gomory pure-integer cuts.

 Our input mixed-integer problem is not in standard form, and so our cuts are derived rather differently from how they are normally derived.
A convenient way to develop GMI cuts is from MIR (mixed-integer rounding) cuts,
 which are developed from 2-dimensional BMI (basic mixed integer) cuts, which
 involve a nonnegative continuous variable and an integer variable.
 The nonnegativity of the continuous variable is not the right
 tool for us, as our starting point (the ``dual form'' mixed-integer optimization problem)
 has no nonnegativity. So we work out a different 2-dimensional starting point,
 a pair of somewhat arbitrary inequalities in one continuous and one integer variable.
 In the end, we follow the approach of He and Lee, getting now a finitely converging
 primal-simplex column-generation algorithm for mixed-integer optimization problems.
\end{abstract}
\subjclass{90C10}
\keywords{mixed-integer programming, Gomory mixed-integer cut, mixed-integer rounding cut,
basic mixed-integer cut, column generation, primal simplex algorithm}
\maketitle
%%-----------------------------
%%      your text
%%-----------------------------
\section*{Introduction}
We assume some familiarity with mixed-integer linear optimization;
see \cite{CCZ} for a modern treatment.
Gomory mixed-integer (GMI) cuts (see \cite{GomoryMixed}) are well-known to be
responsible for the great improvement of
mixed-integer linear-optimization solvers in the 1990's (see \cite{corn},
for example).
See \cite{LeeLP} for a presentation of standard GMI cuts.

We present a version of GMI cuts
in a way so that they are derived with respect to
 a ``dual form'' mixed-integer optimization problem and applied on the standard-form primal side as columns, using the primal simplex algorithm. In doing so, we get a finitely-converging algorithm that employs only
 the \emph{primal} simplex algorithm. Computational advantages of our approach are:
 (i) the
 size of our simplex-method bases does not change as cuts are added, which is not the case for the usual approach in which cuts are added as rows, and (ii) for formulations that naturally have unrestricted
 variables and inequality constraints, we do not increase the size of the formulation by needing to put it into standard form.

 Our presentation is completely self contained, modulo familiarity with the primal simplex algorithm in matrix form. Indeed,
 our presentation  can serve
 as a guide for a self-contained educational module on GMI(-like) cuts, for those who have
 a solid understanding of the primal simplex algorithm in matrix form.
 Additionally, we derive and employ our cuts in a ``deconstructed'' manner, where one can readily see the generality and modularity
 of ideas and where hypotheses are used.

 See \cite{HeLee2015} for a presentation of a different
pedagogy for Gomory \emph{pure}-integer (GPI) cuts, along
the line that we develop here. We note that GMI cuts are
derived from a rather different and more-complicated principle
than that of GPI cuts, so our task is not straightforward.
But, in the end, following the approach of \cite{HeLee2015},
we get a finitely converging
 primal-simplex column-generation algorithm for mixed-integer optimization problems.

We assume  $A\in \mathbb{Z}^{m\times n}$, $c\in \mathbb{Z}^n$, and
we consider a mixed-integer optimization problem of the ``dual form''
\[
\tag*{($\mbox{D}_\mathcal{I}$)}
\begin{array}{rlcl}
z:= \max & y'b  &      &   \\
      &  y'A  &   \leq  & c'; \\
      &   y_i  & \in & \mathbb{Z}, \mbox{ for $i\in\mathcal{I}$,}
\end{array}
\]
where nonempty $\mathcal{I}\subset \{1,2,\ldots,n\}$.
The associated continuous relaxation is denoted $\mbox{D}$.

This linear-optimization problem has a nonstandard form as a point of departure,
but it is convenient that the dual of the continuous relaxation $\mbox{D}$ has the \emph{standard} ``primal form''
\[
\tag*{($\mbox{P}$)}
\begin{array}{rrcl}
 \min & c'x  &      &   \\
      &  Ax  &   =  & b; \\
      &   x  & \geq & \mathbf{0}.
\end{array}
\]

We note that in our algorithmic methodology,  we will solve continuous problems on the primal side, thus completely avoiding the dual simplex algorithm.

In \S\ref{sec:fancy}, we develop our analogs of BMI and GMI cuts which
are useful in our set up. In \S\ref{sec:fca}, we specify a finitely-converging
column-generation algorithm. In \S\ref{sec:fca:setup}, we give the appropriate lexicographic reformulation.
In \S\ref{sec:fca:first}, we analyze what happens to the dual solution, one pivot after
a column is introduced to P. This analysis is more complicated than the analogous one from
\cite{HeLee2015}. Finally, in \S\ref{sec:fca:alg_proof}, which is almost verbatim from \cite{HeLee2015} but included here to make this note self contained, we specify the algorithm
and give the convergence proof.

\section{Fancy BMI and GMI Inequalities}\label{sec:fancy}

Let $\beta$ be an optimal basis for P. Let $\bar{y}':=c'_\beta A_\beta^{-1}$ be the
associated dual basic solution. Suppose that $\bar{y}_i \notin \mathbb{Z}$, for some
$i\in\mathcal{I}$. We aim to find a cut, valid for $\mbox{D}_\mathcal{I}$, and violated by
$\bar{y}$.

Let
\[
\tilde{b}^1:=e_i + A_\beta r,
\]
where $e_i$ is the $i$-th standard unit vector, and $r\in\mathbb{R}^m$ will be determined later. We will accumulate the conditions we need to impose on $r$, as we go.

Let $w^1$ be the basic solution associated with the basis $\beta$ and the ``right-hand side'' $\tilde{b}^1$.
So $w^1_\beta= h_{\cdot i} + r$, where $h_{\cdot i}$ is defined as the $i$-th column of $A^{-1}_\beta$, and $w^1_\eta=\mathbf{0}$. Choosing  $r\geq -h_{\cdot i}$, we can make $w^1\geq \mathbf{0}$. Moreover, $c'w^1=c'_\beta (h_{\cdot i}+r) = c'_\beta h_{\cdot i} + c'_\beta r
= \bar{y}_i + c'_\beta r$, so because we assume that $\bar{y}_i\notin \mathbb{Z}$, we can choose  $r\in\mathbb{Z}^m$, and we have that $c'w^1\notin \mathbb{Z}$.

Next, let
\[
\tilde{b}^2:=A_\beta r.
\]
Let $w^2$ be the basic solution associated with the basis $\beta$ and the ``right-hand side'' $\tilde{b}^2$. So, now further choosing $r\geq \mathbf{0}$, we have $w^2_\beta= r \geq \mathbf{0}$, $w^2_\eta=\mathbf{0}$, and $c'w^2=c'_\beta r$.

So, we choose $r$ so that:
\begin{equation*}
\tag{$\kappa$}\label{cut_condition}
r\in \mathbb{Z}^m,~ r\geq -h_{\cdot i} \mbox{ and } r\geq \mathbf{0}.
\end{equation*}

Because we have chosen
$w^1$ and $w^2$ to be nonnegative,
forming $(y'A)w^l \leq c'w^l$, for $l=1,2$, we get
a pair of valid inequalities
for $\mbox{D}$. They have the form $y'\tilde{b}^l \leq c'w^l$,
 for $l=1,2$. Let $\alpha'_j$ denote  the $j$-th row of $A_\beta$.
 Then our  inequalities have the form:
\begin{align*}
\label{I1}\tag{I1} (1 + \alpha'_i r) y_i + \sum_{j:j\not= i} (\alpha'_j r) y_j &\leq \bar{y}_i + \bar{y}'A_\beta r,\\
\label{I2}\tag{I2}     (\alpha'_i r) y_i + \sum_{j:j\not= i} (\alpha'_j r) y_j &\leq \bar{y}' A_\beta r.
\end{align*}
Now, defining $z:=\sum_{j:j\not= i} (\alpha'_j r_) y_j$,we have the following inequalities
in the two variables $y_i$ and $z$:
\begin{align*}
& & \mbox{\underline{slope}}\\
\label{B1}\tag{B1} (1 + \alpha'_i r) y_i + z &\leq \bar{y}_i + \bar{y}'A_\beta r & -1/(1+\alpha'_i r)\\
\label{B2}\tag{B2}    (\alpha'_i r) y_i + z &\leq \bar{y}'A_\beta r  & -1/\alpha'_i r
\end{align*}
Note that the intersection point $(y^*_i,z^*)$ of the lines associated with these inequalities
(subtract the second equation from the first) has $y^*_i=\bar{y}_i$ and
$z^*=\sum_{j:j\not= i} (\alpha'_j r) \bar{y}_j$.

Bearing in mind that we choose $r\in\mathbb{Z}^m$ and that $A$ is assumed to be integer,
we have that $\alpha'_i r\in\mathbb{Z}$.
There are now two cases to consider:
\begin{itemize}
\item $\alpha'_i r \geq 0$, in which case the first line has negative slope and the second line has more negative  slope (or infinite $\alpha'_i r = 0$);
\item $\alpha'_i r \leq -1$, in which case the second line has positive slope and the first line has more positive slope (or infinite $\alpha'_i r = -1$).
\end{itemize}

See Figures \ref{fig:fbmi1} and \ref{fig:fbmi2}.

%\begin{figure}[h]
%\caption{F-BMI cut when $\alpha'_i r \geq 0$}\label{fig:fbmi1}
%\vskip10pt
%\includegraphics[angle=90,width=2.5in]{fbmi1}
%\end{figure}
%
%\begin{figure}[htb]
%\centering
%\def\svgwidth{200pt}
%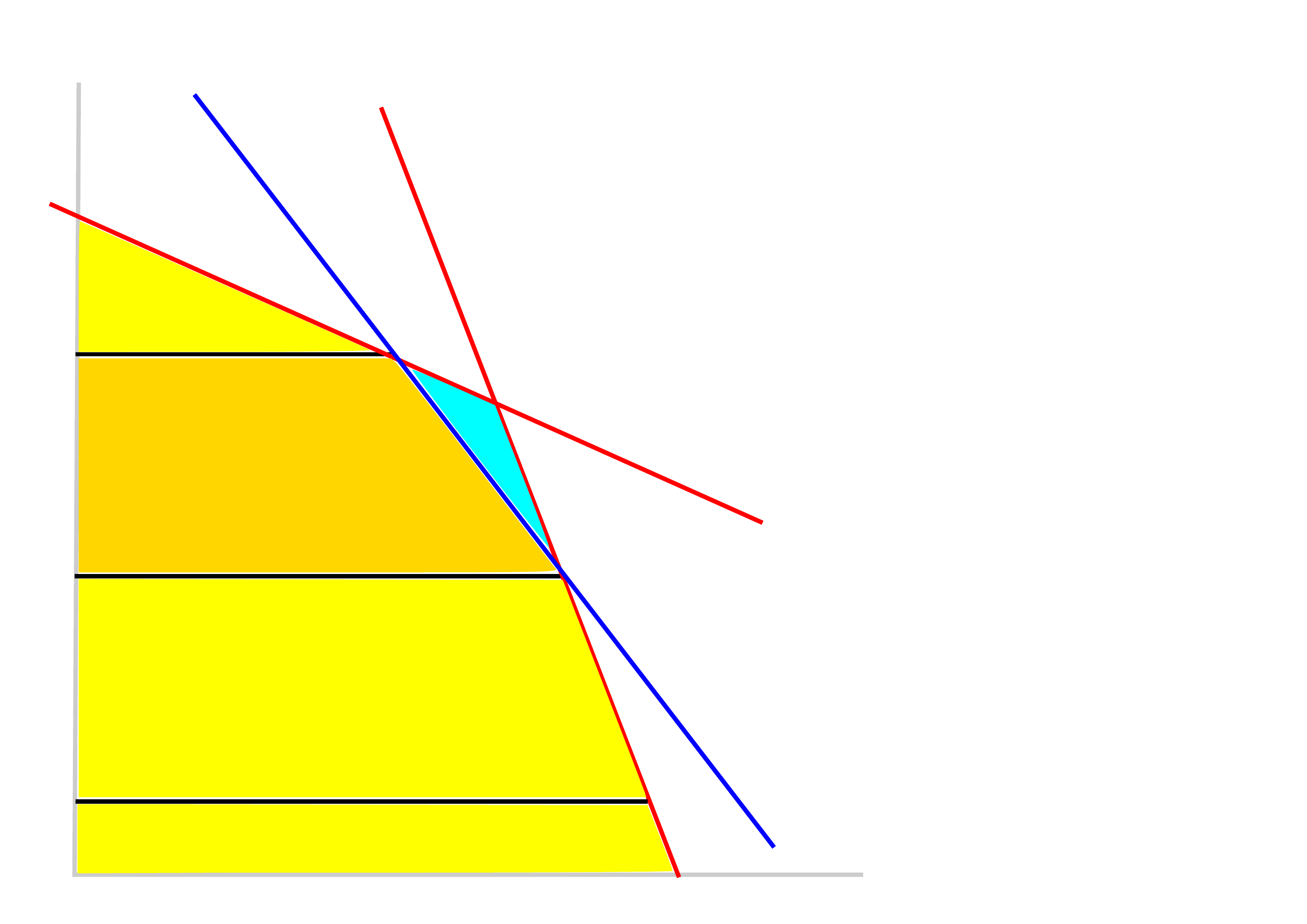
%\caption{Testing}
%\end{figure}

%\begin{figure}[h]
%\caption{F-BMI cut when $\alpha'_i r \leq -1$}\label{fig:fbmi2}
%\vskip10pt
%\includegraphics[angle=90,width=2.5in]{fbmi2}
%\end{figure}

In both cases, we are interested in the point $(z^1,y_i^1)$ where the first line intersects the line
$y_i=\lfloor \bar{y}_i\rfloor + 1$ and the point $(z^2,y_i^2)$ where the second line intersects the line
$y_i=\lfloor \bar{y}_i\rfloor$.

%Also, the slopes of the two lines are
%negative (the second line has infinite slope in the extreme case of $r_i=0$), and the first line is shallower.
%Let $(z^1,y_i^1)$ be the intersection point of the first line with the line
%$y=\lfloor \bar{y}_i\rfloor + 1$. Let $(z^2,y_i^2)$ be the intersection point of the second line with the line
%$y=\lfloor \bar{y}_i\rfloor$.
We can check that
\begin{align*}
z^1 &= \bar{y}_i +\bar{y}'A_\beta r - (1+\alpha'_i r) \left( \lfloor \bar{y}_i \rfloor +1 \right),\\
z^2 &= \bar{y}'A_\beta r-(\alpha'_i r) \lfloor \bar{y}_i \rfloor.
\end{align*}
Subtracting, we have
\[
z_1 - z_2 = \underbrace{(\bar{y}_i - \lfloor \bar{y}_i \rfloor)}_{\in (0,1)} - (1 +\underbrace{\alpha'_i r}_{\in\mathbb{Z}}),
\]
so we see that: $z^1<z^2$ precisely when $\alpha'_i r\geq 0$;
 $z^2 < z^1$ precisely when $\alpha'_i r\leq -1$. Moreover, the slope of the line through
 the pair of points $(z^1,y_i^1)$ and $(z^2,y_i^2)$ is just
 \[
 \frac{1}{z^1-z^2} =  \frac{1}{(\bar{y}_i - \lfloor \bar{y}_i \rfloor) - (1 +\alpha'_i r)}.
 \]

\begin{figure}[h]
\centering
\def\svgwidth{250pt}
\caption{F-BMI cut when $\alpha'_i r \geq 0$}\label{fig:fbmi1}
%\vskip5pt
\input{drawing1.pdf_tex}
\end{figure}
\begin{figure}[h!]
\centering
\def\svgwidth{250pt}
\caption{F-BMI cut when $\alpha'_i r \leq -1$}\label{fig:fbmi2}
%\vskip5pt
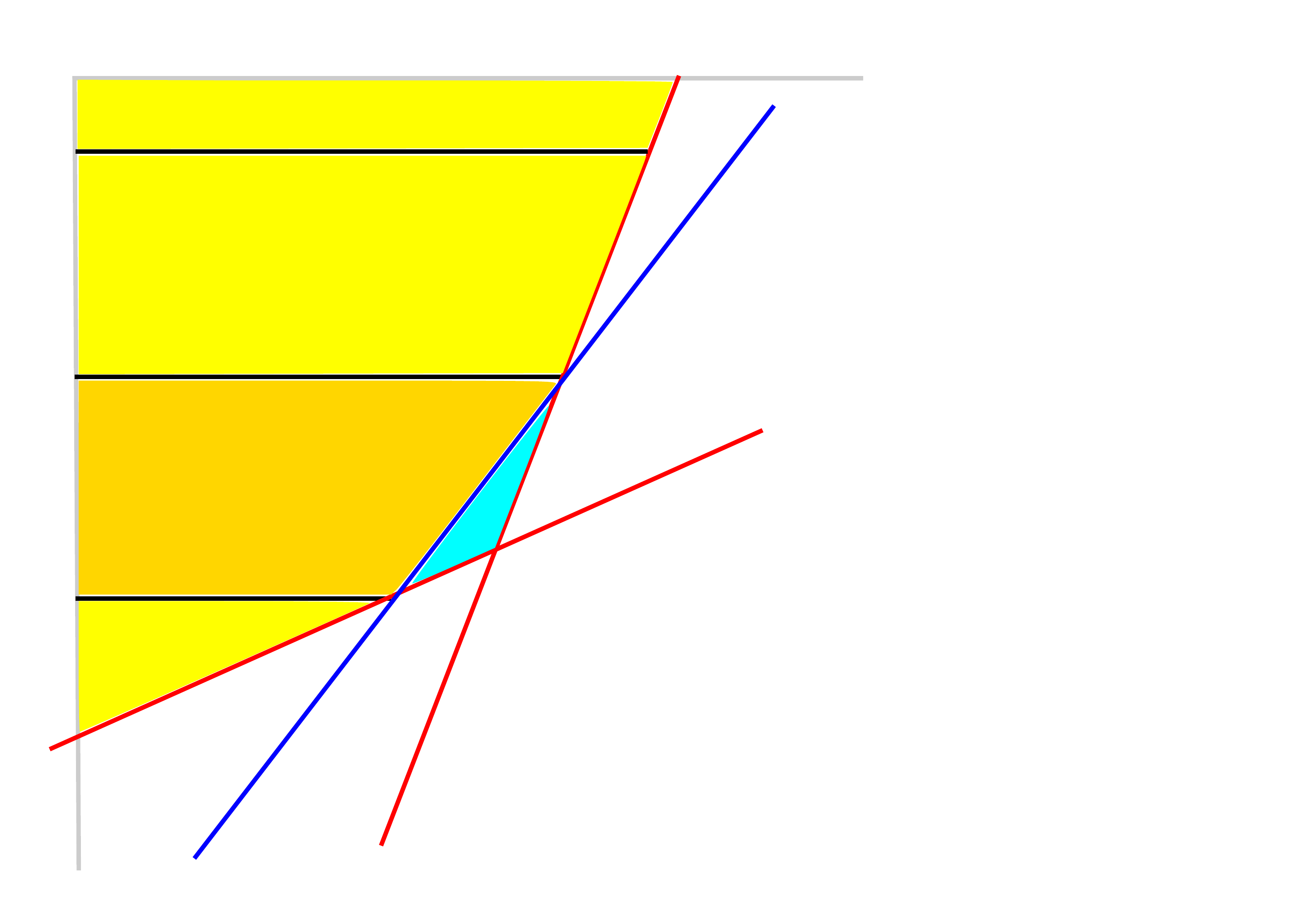
\end{figure}

%It is easy to check that
%\[
%z^1 < z^* < z^2,
%\]
%and that the slope of the line through the points $(z^1,y_i^1)$ and $(z^2,y_i^2)$ is
%\[
%\frac{1}{\left(
%\bar{y}_i - \lfloor\bar{y}_i\rfloor
%\right) - \left( r_i +1\right)}<0.
%\]

We now define the inequality
\begin{equation*}
\left((\bar{y}_i - \lfloor \bar{y}_i \rfloor) - (1 +\alpha'_i r)\right)
\left(y_i - \lfloor\bar{y}_i\rfloor\right)
\geq
z -
\bar{y}'A_\beta r+(\alpha'_i r) \lfloor \bar{y}_i \rfloor
,
\end{equation*}
which has the more convenient form
\begin{equation*}
\tag{F-BMI}\label{fbmi}
\left((1+\alpha'_i r)-(\bar{y}_i -\lfloor\bar{y}_i\rfloor)\right)y_i + z
\leq  \bar{y}'A_\beta r -
\left(\bar{y}_i - \lfloor\bar{y}_i\rfloor - 1\right)\lfloor \bar{y}_i\rfloor.
\end{equation*}

By construction, we have the following result two results.
\begin{prop}
The inequality \ref{fbmi}
is satisfied at equality by both of the points $(z^1,y_i^1)$ and $(z^2,y_i^2)$.
\end{prop}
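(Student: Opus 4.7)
The plan is to treat the proposition as essentially a bookkeeping check on the derivation immediately preceding the statement. Recall that F-BMI was constructed as the chord through $(z^1,y_i^1)$ and $(z^2,y_i^2)$: the inequality appears first in the form
\[
\bigl((\bar{y}_i-\lfloor\bar{y}_i\rfloor)-(1+\alpha'_i r)\bigr)\bigl(y_i-\lfloor\bar{y}_i\rfloor\bigr)\;\geq\; z-\bar{y}'A_\beta r+(\alpha'_i r)\lfloor\bar{y}_i\rfloor,
\]
and its coefficient of $y_i-\lfloor\bar{y}_i\rfloor$ is precisely $z^1-z^2$, as computed in the display just above. So conceptually I would just point to this: the boundary of the F-BMI half-plane is the unique line of slope $1/(z^1-z^2)$ through $(z^2,\lfloor\bar{y}_i\rfloor)$, and by the very definition of the slope this line also passes through $(z^1,\lfloor\bar{y}_i\rfloor+1)$.

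If a direct calculation is preferred, I would verify the two equalities by substitution. First, plug $y_i=y_i^2=\lfloor\bar{y}_i\rfloor$ and $z=z^2=\bar{y}'A_\beta r-(\alpha'_i r)\lfloor\bar{y}_i\rfloor$ into the left-hand side of \ref{fbmi}; expanding, the term $\bar{y}'A_\beta r$ cancels against the right-hand side and the $\alpha'_i r$ contributions cancel, leaving the identity $-(\bar{y}_i-\lfloor\bar{y}_i\rfloor)\lfloor\bar{y}_i\rfloor+\lfloor\bar{y}_i\rfloor=-(\bar{y}_i-\lfloor\bar{y}_i\rfloor-1)\lfloor\bar{y}_i\rfloor$, which holds trivially. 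Second, plug $y_i=y_i^1=\lfloor\bar{y}_i\rfloor+1$ and $z=z^1=\bar{y}_i+\bar{y}'A_\beta r-(1+\alpha'_i r)(\lfloor\bar{y}_i\rfloor+1)$; once again $\bar{y}'A_\beta r$ and the $\alpha'_i r$ terms cancel across the equation, and one is left with $\bar{y}_i-(\bar{y}_i-\lfloor\bar{y}_i\rfloor)(\lfloor\bar{y}_i\rfloor+1)=-(\bar{y}_i-\lfloor\bar{y}_i\rfloor-1)\lfloor\bar{y}_i\rfloor$, which is a routine algebraic identity in the fractional part $\bar{y}_i-\lfloor\bar{y}_i\rfloor$.

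I do not expect any real obstacle: the derivation has already done all the geometric work, and the main risk is purely clerical, namely keeping the signs of $\alpha'_i r$ and of the fractional part $\bar{y}_i-\lfloor\bar{y}_i\rfloor$ straight through the rearrangement. The only identity used beyond arithmetic is the slope formula $z^1-z^2=(\bar{y}_i-\lfloor\bar{y}_i\rfloor)-(1+\alpha'_i r)$ established just above the statement of the proposition, so the check does not depend on which of the two cases $\alpha'_i r\geq 0$ or $\alpha'_i r\leq -1$ one is in.
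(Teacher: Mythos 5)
Your verification is correct and matches the paper's intent exactly: the paper offers no written proof beyond ``by construction,'' and your substitution check (together with the observation that the boundary line of \ref{fbmi} is, by its very form $(z^1-z^2)(y_i-\lfloor\bar{y}_i\rfloor)=z-z^2$, the chord through $(z^2,\lfloor\bar{y}_i\rfloor)$ and $(z^1,\lfloor\bar{y}_i\rfloor+1)$) simply makes that construction explicit. Both of your algebraic identities check out, so there is nothing to add.
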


\begin{prop}
The inequality \ref{fbmi}
is valid for
\[
\left\{
(y_i,z)\in\mathbb{R}^2 ~:~  \mbox{B1},~ y_i \geq \lceil\bar{y}_i\rceil
\right\}
\cup
\left\{
(y_i,z)\in\mathbb{R}^2 ~:~  \mbox{B2},~ y_i \leq \lfloor\bar{y}_i\rfloor
\right\}.
\]
\end{prop}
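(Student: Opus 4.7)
My plan is to rewrite the F-BMI inequality in the equivalent form $z \le g(y_i)$, where $g$ is an affine function of $y_i$, and then, on each of the two half-lines $y_i \ge \lceil\bar{y}_i\rceil$ and $y_i \le \lfloor\bar{y}_i\rfloor$, show that $g$ dominates (i.e.\ lies weakly above) the right-hand side of the corresponding B-inequality. The preceding proposition already supplies the boundary conditions: $g$ agrees with the B1 upper bound at $y_i = \lceil\bar{y}_i\rceil$ (the point $(z^1,y_i^1)$) and with the B2 upper bound at $y_i = \lfloor\bar{y}_i\rfloor$ (the point $(z^2,y_i^2)$). So on each half-line it will suffice to compare the slope of $g$ with the slope of the relevant B-bound and invoke a one-dimensional monotonicity argument.

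Setting $f := \bar{y}_i - \lfloor\bar{y}_i\rfloor \in (0,1)$ (strict, because $\bar{y}_i \notin \mathbb{Z}$), a direct read-off from the three inequalities gives the slopes of $z$ as an affine function of $y_i$: the B1-bound has slope $-(1+\alpha'_i r)$, the B2-bound has slope $-\alpha'_i r$, and $g$ has slope $-(1+\alpha'_i r) + f$. The crucial algebraic observation is then
\[
g'(y_i) - z_{\text{B1}}'(y_i) = f > 0, \qquad z_{\text{B2}}'(y_i) - g'(y_i) = 1-f > 0.
\]
Combined with the two boundary equalities, this immediately yields $g(y_i) \ge z_{\text{B1}}(y_i)$ for all $y_i \ge \lceil\bar{y}_i\rceil$ and $g(y_i) \ge z_{\text{B2}}(y_i)$ for all $y_i \le \lfloor\bar{y}_i\rfloor$. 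Hence a point in the first set satisfies $z \le z_{\text{B1}}(y_i) \le g(y_i)$, and by the symmetric argument a point in the second set satisfies $z \le z_{\text{B2}}(y_i) \le g(y_i)$, so F-BMI holds on both pieces.

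There is no real obstacle. The argument is purely two-dimensional and, pleasantly, requires no case split on the sign of $\alpha'_i r$: the two geometric configurations depicted in Figures \ref{fig:fbmi1} and \ref{fig:fbmi2} are handled uniformly by the single slope calculation above. The only hypotheses actually used are $\bar{y}_i \notin \mathbb{Z}$ (to obtain $f \in (0,1)$, i.e.\ \emph{strict} domination away from the boundary points), the affineness of F-BMI, B1, B2 in $y_i$, and the equality statements supplied by the preceding proposition.
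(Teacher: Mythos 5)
Your argument is correct and is essentially the paper's: the paper asserts this proposition ``by construction'' and only supplies the details inside the later F-GMI validity proof, where \ref{fbmi} is exhibited as \ref{B1} plus $\bar{y}_i-\lfloor\bar{y}_i\rfloor$ times $-y_i\leq-\lfloor\bar{y}_i\rfloor-1$, and as \ref{B2} plus $1-(\bar{y}_i-\lfloor\bar{y}_i\rfloor)$ times $y_i\leq\lfloor\bar{y}_i\rfloor$. Your slope comparison together with the boundary equalities is exactly that identity in different notation --- the slope gaps $f$ and $1-f$ are precisely the disjunctive multipliers --- and, like the paper's version, it correctly needs no case split on the sign of $\alpha'_i r$.
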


\begin{prop}\label{lem:violate}
The inequality \ref{fbmi}
is violated by the point $(y^*_i,z^*)$.
\end{prop}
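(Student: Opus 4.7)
The plan is a direct algebraic substitution. From the discussion just before the two earlier propositions, we have $(y_i^*, z^*) = \bigl(\bar{y}_i,\, \sum_{j \neq i}(\alpha_j' r)\bar{y}_j\bigr)$. The first key rewriting is $z^* = \bar{y}'A_\beta r - \bar{y}_i(\alpha_i' r)$, obtained by isolating the $j=i$ term from the identity $\bar{y}'A_\beta r = \sum_j (\alpha_j' r) \bar{y}_j$. This puts $z^*$ in a form directly compatible with the $\bar{y}'A_\beta r$ term on the right-hand side of \ref{fbmi}.

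Next, I would set $f := \bar{y}_i - \lfloor \bar{y}_i \rfloor$ and $n := \lfloor \bar{y}_i \rfloor$, so that $\bar{y}_i = n+f$. The standing hypothesis $\bar{y}_i \notin \mathbb{Z}$ from the beginning of Section~\ref{sec:fancy} gives \emph{strict} inclusion $f \in (0,1)$, which is exactly what will force a strict violation at the end. Substituting $y_i = n+f$ and the above expression for $z^*$ into \ref{fbmi}, I compute $\mathrm{LHS} - \mathrm{RHS}$. The $\bar{y}'A_\beta r$ terms cancel between the two sides, the $(\alpha_i' r)(n+f)$ contributions from the coefficient $(1+\alpha_i' r) - f$ on the left cancel against $\bar{y}_i(\alpha_i' r)$ coming from $z^*$, and after collecting the remaining pieces I expect the residue to collapse to $(n+f)(1-f) - (1-f)n = f(1-f)$.

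Because $f \in (0,1)$ strictly, $f(1-f) > 0$, and so $\mathrm{LHS} > \mathrm{RHS}$ at $(y_i^*, z^*)$, which is exactly the strict violation asserted. I do not anticipate a genuine obstacle: the argument is a short algebraic check, and the only care needed is clean bookkeeping of the terms involving $\alpha_i' r$ and $\bar{y}'A_\beta r$, together with explicit use of strict non-integrality of $\bar{y}_i$ at the final step. As a geometric sanity check, the preceding proposition says that \ref{fbmi} is the line through $(z^1, y_i^1)$ and $(z^2, y_i^2)$, which sit at $y_i$-levels $\lceil\bar{y}_i\rceil$ and $\lfloor\bar{y}_i\rfloor$ respectively, while $(y_i^*, z^*)$ is the intersection of B1 and B2 at the strictly intermediate level $y_i = \bar{y}_i$; Figures~\ref{fig:fbmi1} and~\ref{fig:fbmi2} show that this intermediate intersection lies on the infeasible side of \ref{fbmi}, in agreement with the algebraic conclusion.
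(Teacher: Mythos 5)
Your proof is correct and is essentially the paper's own argument: a direct substitution of $(y_i^*,z^*)$ into \ref{fbmi} followed by algebraic simplification, reducing the inequality to $f(1-f)\leq 0$ (equivalently, the paper's $(\bar{y}_i-\lfloor\bar{y}_i\rfloor-1)(\bar{y}_i-\lfloor\bar{y}_i\rfloor)\geq 0$), which fails strictly since $\bar{y}_i\notin\mathbb{Z}$. Your write-up just carries out explicitly the cancellations that the paper summarizes as ``some if-and-only-if manipulations.''
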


\begin{proof}
Plugging $(y^*_i,z^*)$ into \ref{fbmi}, and making some if-and-only-if manipulations, we obtain
\[
\left(
\bar{y}_i - \lfloor\bar{y}_i\rfloor-1
\right)
\left(
\bar{y}_i - \lfloor\bar{y}_i\rfloor
\right)
\geq 0,
\]
which is not satisfied.
\end{proof}

Finally, translating
\ref{fbmi}
back to the original variables $y\in\mathbb{R}^m$, we get
\begin{equation*}
\left((1+\alpha'_i r)-(\bar{y}_i -\lfloor\bar{y}_i\rfloor)\right)y_i + \sum_{j:j\not= i} (\alpha'_j r) y_j
\leq  \bar{y}'A_\beta r -
\left(\bar{y}_i - \lfloor\bar{y}_i\rfloor - 1\right)\lfloor \bar{y}_i\rfloor,
%\left((\bar{y}_i - \lfloor \bar{y}_i \rfloor) - (1 +\alpha'_i r)\right)
%\left(y_i - \lfloor\bar{y}_i\rfloor\right)
%\geq
%\sum_{j:j\not= i} (\alpha'_j r) y_j  -
%\bar{y}'A_\beta r+(\alpha'_i r) \lfloor \bar{y}_i \rfloor
%.
\end{equation*}
or,
\begin{equation*}
-\left(
\bar{y}_i - \lfloor\bar{y}_i\rfloor - 1
\right)y_i
+y'A_\beta r
\leq
\bar{y}'A_\beta r
-\left(
\bar{y}_i - \lfloor\bar{y}_i\rfloor - 1
\right) \lfloor\bar{y}_i\rfloor,
\end{equation*}
which, finally has the convenient form
\begin{equation*}
\tag{F-GMI}\label{fgmi}
y'\left(
A_\beta r - \left(\bar{y}_i - \lfloor\bar{y}_i\rfloor - 1\right)e_i
\right)
\leq
c'_\beta r - \left(\bar{y}_i - \lfloor\bar{y}_i\rfloor - 1\right)\lfloor\bar{y}_i\rfloor.
\end{equation*}

We immediately have:
\begin{cor}
The inequality \ref{fgmi}
is violated by the point $\bar{y}$.
\end{cor}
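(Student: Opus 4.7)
The plan is to observe that the passage from \ref{fbmi} to \ref{fgmi} is nothing more than the substitution $z = \sum_{j\ne i}(\alpha'_j r) y_j$ applied to the $y$-variables, followed by a rearrangement of terms (specifically, moving $y'A_\beta r$ to the left and combining the $y_i$ coefficient). In particular, for any vector $y\in\mathbb{R}^m$, the left- and right-hand sides of \ref{fgmi} evaluated at $y$ agree with the left- and right-hand sides of \ref{fbmi} evaluated at the pair $\bigl(y_i,\, \sum_{j\ne i}(\alpha'_j r) y_j\bigr)$.

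Now I specialize to $y=\bar{y}$. Then $y_i = \bar{y}_i = y_i^*$, and $\sum_{j\ne i}(\alpha'_j r)\bar{y}_j = z^*$ by the definition of $(y_i^*,z^*)$ given just after \ref{B2}. So \ref{fgmi} evaluated at $\bar{y}$ is exactly \ref{fbmi} evaluated at the intersection point $(y_i^*,z^*)$.

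By Proposition \ref{lem:violate}, \ref{fbmi} is violated at $(y_i^*,z^*)$; hence \ref{fgmi} is violated at $\bar{y}$, which is what we want. There is no real obstacle here: the only thing that must be checked carefully is that the algebra leading from \ref{fbmi} to \ref{fgmi} is a term-by-term identity under the substitution $z = \sum_{j\ne i}(\alpha'_j r) y_j$ (and in particular that no inequality sign or coefficient is altered in the process), which was already laid out explicitly in the display just before \ref{fgmi}. Thus the corollary reduces immediately to Proposition \ref{lem:violate}.
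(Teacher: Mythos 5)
Your proof is correct and is exactly the paper's (implicit) argument: the paper presents the corollary as immediate because \ref{fgmi} is obtained from \ref{fbmi} by the substitution $z=\sum_{j\neq i}(\alpha'_j r)y_j$ (together with the identity $\bar{y}'A_\beta r=c'_\beta r$), so evaluating at $\bar{y}$ lands precisely on the point $(y_i^*,z^*)$ handled by Proposition~\ref{lem:violate}. Nothing to add.
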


Finally, we have:
\begin{prop}
\ref{fgmi} is valid for the following relaxation of the feasible
region of D:
%$\{
%y\in \mathbb{R}^m ~:~ y'A_\beta\leq c'_\beta,~ y_i\in \mathbb{Z}
%\}$.
\[
\{
y\in \mathbb{R}^m ~:~ y'A_\beta\leq c'_\beta,~ y_i \geq \lceil\bar{y}_i\rceil
\}
\cup
\{
y\in \mathbb{R}^m ~:~ y'A_\beta\leq c'_\beta,~ y_i \leq \lfloor\bar{y}_i\rfloor
\}
\]
\end{prop}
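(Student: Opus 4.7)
The plan is to lift the validity of \ref{fbmi}, already established in the preceding proposition for the union of the two $(y_i,z)$-sets, back up to $m$-dimensional $y$-space via the substitution $z = \sum_{j:j\neq i}(\alpha'_j r) y_j$. The key observation is that for any $y\in\mathbb{R}^m$ satisfying $y'A_\beta \leq c'_\beta$, the 2-dimensional point $(y_i,z)$ with this $z$ automatically satisfies both \ref{B1} and \ref{B2}: multiplying $y'A_\beta \leq c'_\beta$ on the right by the nonnegative vector $w^l_\beta$ (for $l=1,2$) yields $y'A_\beta w^l_\beta \leq c'_\beta w^l_\beta$, which, because $w^l$ was defined as the basic solution for right-hand side $\tilde b^l$ with $w^l_\eta = \mathbf{0}$, simplifies exactly to $y'\tilde b^l \leq c'w^l$, i.e., to \ref{B1} respectively \ref{B2} in the $(y_i,z)$-variables.

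With that in hand, the argument splits along the union. First I would take $y$ in the first set, so $y'A_\beta \leq c'_\beta$ and $y_i \geq \lceil \bar y_i\rceil$; then $(y_i,z)$ lies in the first 2D-set of the preceding proposition, hence satisfies \ref{fbmi}. Second, for $y$ in the second set, the same reasoning using $w^2$ places $(y_i,z)$ in the second 2D-set, so \ref{fbmi} again holds. In either case, unpacking $z = \sum_{j\neq i}(\alpha'_j r)y_j$ and performing the same algebraic rearrangement already carried out in the text between \ref{fbmi} and \ref{fgmi} (i.e., collecting the $y_j$ coefficients into $y'A_\beta r$ and rewriting the $y_i$-coefficient) produces \ref{fgmi}.

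The main obstacle, such as it is, is just bookkeeping: one must verify that the nonnegativity conditions imposed on $r$ via \ref{cut_condition} still deliver $w^1_\beta, w^2_\beta \geq \mathbf{0}$ here (they do, by construction in \S\ref{sec:fancy}) so that the multiplication step preserves the inequality direction. There is no new combinatorial content beyond the 2D proposition; the proof is essentially a one-line reduction followed by the change of variables already exhibited in the derivation of \ref{fgmi} from \ref{fbmi}.
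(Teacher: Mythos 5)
Your proof is correct and follows essentially the same disjunctive route as the paper: both arguments rest on the fact that \ref{I1} and \ref{I2} are valid for $\{y \in \mathbb{R}^m : y'A_\beta \leq c'_\beta\}$ because $w^1,w^2\geq\mathbf{0}$, and then treat the two pieces of the disjunction separately before translating back via $z=\sum_{j:j\neq i}(\alpha'_j r)y_j$. The only difference is presentational: you reduce to the already-stated two-dimensional proposition, whereas the paper exhibits the explicit nonnegative multipliers (adding $\bar y_i-\lfloor\bar y_i\rfloor$ times $-y_i\leq-\lfloor\bar y_i\rfloor-1$ to \ref{I1}, and $1-(\bar y_i-\lfloor\bar y_i\rfloor)$ times $y_i\leq\lfloor\bar y_i\rfloor$ to \ref{I2}) that certify \ref{fgmi} on each piece.
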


\begin{proof}
The proof, maybe obvious, is by a simple disjunctive-programming argument.
We  simply argue that \ref{fbmi} is valid for both
$S_1:=\{
y\in \mathbb{R}^m ~:~ y'A_\beta\leq c'_\beta,~ -y_i\leq -\lfloor \bar{y}_i \rfloor -1
\}$ and
$S_2:=\{
y\in \mathbb{R}^m ~:~ y'A_\beta\leq c'_\beta,~ y_i\leq \lfloor \bar{y}_i \rfloor
\}$.
%This should work by taking the multipliers $\pi_{B1},\rho_1$  showing validity of \ref{fbmi} for
%$T_1:= \{ (y_i,z) ~:~ \ref{B1},~ -y_i\leq -\lfloor \bar{y}_i \rfloor -1\}$
%and
%using $\rho_1$ on \ref{I1}, and taking the multipliers $\pi_{B1},\rho_1$ showing validity of \ref{fbmi} for
%$T_2:= \{ (y_i,z) ~:~ \ref{B2},~ y_i\leq \lfloor \bar{y}_i\}$
%and
%using  $\rho_2$ on \ref{I2}. \emph{NEED TO ADD DETAILS}

%The inequality  \ref{fbmi} can be rewritten as:
%\[
%\left((1+\alpha'_i r)-(\bar{y}_i -\lfloor\bar{y}_i\rfloor)\right)y_i + z
%\leq  \bar{y}'A_\beta r -
%\left(\bar{y}_i - \lfloor\bar{y}_i\rfloor - 1\right)\lfloor \bar{y}_i\rfloor.
%\]

The inequality  \ref{fbmi} is simply  \ref{B1} plus $\bar{y}_i - \lfloor\bar{y}_i\rfloor$
times $-y_i\leq -\lfloor \bar{y}_i \rfloor -1$.
It follows than that taking \ref{I1} plus $\bar{y}_i - \lfloor\bar{y}_i\rfloor$
times $-y_i\leq -\lfloor \bar{y}_i \rfloor -1$, we get an inequality equivalent to
\ref{fgmi}.

Similarly, it is easy to check that the inequality  \ref{fbmi} is simply  \ref{B2} plus $1-(\bar{y}_i - \lfloor\bar{y}_i\rfloor)$
times $y_i\leq \lfloor \bar{y}_i \rfloor$.
It follows than that taking \ref{I2} plus $1-(\bar{y}_i - \lfloor\bar{y}_i\rfloor)$
times $y_i\leq \lfloor \bar{y}_i \rfloor$, we get an inequality equivalent to
\ref{fgmi}.
\end{proof}

In our algorithm, we append columns to P, rather than cuts to D.
The column for P corresponding to \ref{fgmi}
is
\[
A_\beta r - \left(\bar{y}_i - \lfloor\bar{y}_i\rfloor - 1\right)e_i,
\]
and the associated cost coefficient is
\[
c'_\beta r - \left(\bar{y}_i - \lfloor\bar{y}_i\rfloor - 1\right)\lfloor\bar{y}_i\rfloor.
\]
So $A_\beta^{-1}$ times the column is
\[
r-  \left(\bar{y}_i - \lfloor\bar{y}_i\rfloor - 1\right)h_{\cdot i}~.
\]
Agreeing with what we calculated in Proposition
\ref{lem:violate}, we have the following result.
\begin{prop}
The reduced cost of the column for P corresponding to \ref{fgmi} is
\[
 \left(\bar{y}_i - \lfloor\bar{y}_i\rfloor - 1\right)\left(\bar{y}_i-\lfloor\bar{y}_i\rfloor\right)<0.
\]
\end{prop}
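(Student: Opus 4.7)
The plan is to compute the reduced cost directly from its defining formula. Recall that for a new column $a$ with cost coefficient $c_a$ being priced against the basis $\beta$ of P, the reduced cost with respect to the current dual basic solution $\bar{y}' = c'_\beta A_\beta^{-1}$ is $c_a - \bar{y}'a$. Here, by the formulas just displayed above the statement, $a = A_\beta r - (\bar{y}_i - \lfloor\bar{y}_i\rfloor - 1)e_i$ and $c_a = c'_\beta r - (\bar{y}_i - \lfloor\bar{y}_i\rfloor - 1)\lfloor\bar{y}_i\rfloor$.

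The first step is to substitute these two expressions into $c_a - \bar{y}'a$ and expand. The key identity to invoke is $\bar{y}'A_\beta = c'_\beta A_\beta^{-1} A_\beta = c'_\beta$, which immediately cancels the $c'_\beta r$ term in $c_a$ against the $\bar{y}' A_\beta r$ term coming from $\bar{y}' a$. The remaining two terms involve the $e_i$ contribution: on the cost side we have $-(\bar{y}_i - \lfloor\bar{y}_i\rfloor - 1)\lfloor\bar{y}_i\rfloor$, and on the dual side $\bar{y}' e_i = \bar{y}_i$ produces $+(\bar{y}_i - \lfloor\bar{y}_i\rfloor - 1)\bar{y}_i$. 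Factoring out $(\bar{y}_i - \lfloor\bar{y}_i\rfloor - 1)$ collapses these to $(\bar{y}_i - \lfloor\bar{y}_i\rfloor - 1)(\bar{y}_i - \lfloor\bar{y}_i\rfloor)$, giving the claimed value.

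For the strict inequality, the standing assumption $\bar{y}_i \notin \mathbb{Z}$ gives $\bar{y}_i - \lfloor\bar{y}_i\rfloor \in (0,1)$, so the first factor lies in $(-1,0)$ and the second in $(0,1)$; the product is therefore strictly negative.

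There really is no main obstacle: the proof is a one-line identity once one uses $\bar{y}'A_\beta = c'_\beta$. The only thing worth noting is that the answer matches, and is essentially dual to, the quantity appearing in the proof of Proposition~\ref{lem:violate}; this is the usual correspondence between the amount by which a cut is violated at the current fractional dual point and the (negative of the) reduced cost of the associated primal column, so one could alternatively deduce the statement from Proposition~\ref{lem:violate} rather than redoing the algebra.
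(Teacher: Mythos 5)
Your computation is correct and is essentially the paper's own proof: the paper also evaluates $c_a - c'_\beta(A_\beta^{-1}a) = c_a - \bar{y}'a$, merely precomputing $A_\beta^{-1}a = r - (\bar{y}_i - \lfloor\bar{y}_i\rfloor - 1)h_{\cdot i}$ first and using $c'_\beta h_{\cdot i} = \bar{y}_i$ where you use $\bar{y}'e_i = \bar{y}_i$. Your explicit justification of the strict negativity (both factors nonzero since $\bar{y}_i\notin\mathbb{Z}$) is a small welcome addition that the paper leaves implicit.
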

\begin{proof}
\begin{align*}
c'_\beta r &- \left(\bar{y}_i - \lfloor\bar{y}_i\rfloor - 1\right)\lfloor\bar{y}_i\rfloor
\quad - \quad c'_\beta \left(r-
\left(\bar{y}_i - \lfloor\bar{y}_i\rfloor - 1\right)h_{\cdot i}\right)\\
&= \left(\bar{y}_i - \lfloor\bar{y}_i\rfloor - 1\right)
\left(
c'_\beta h_{\cdot i} - \lfloor\bar{y}_i\rfloor
\right)\\
&= \left(\bar{y}_i - \lfloor\bar{y}_i\rfloor - 1\right)
\left(
\bar{y}_i - \lfloor\bar{y}_i\rfloor
\right).
\end{align*}
\end{proof}

Next, we come to the choice of $r$.

\begin{prop}\label{prop:minimal}
Fix $i$, and let
\[
r_k = \max \{0, -\lfloor h_{ki} \rfloor\}, \mbox{ for } k=1,2,\ldots,m.
\]
If $r'$ satisfies \ref{cut_condition} and $r'\geq r$, then the \ref{fgmi} cut using $r$ dominates the one using $r'$.
\end{prop}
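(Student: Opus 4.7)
The plan is to split the statement into two claims: first that the prescribed $r$ is feasible for $(\kappa)$, and second that any larger feasible $r'$ produces an F-GMI inequality implied by the one using $r$ together with the original constraints of D.

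For feasibility, I would observe that $\lfloor h_{ki}\rfloor$ is an integer, so each $r_k = \max\{0,-\lfloor h_{ki}\rfloor\}$ is a nonnegative integer, and $r_k \geq -\lfloor h_{ki}\rfloor \geq -h_{ki}$, so the three conditions of $(\kappa)$ are all met. This gives the feasibility half of the proposition immediately.

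For the dominance, the key observation is that the \ref{fgmi} inequality depends \emph{linearly} on $r$: rewriting it, the $r$-dependence shows up only as $y'A_\beta r$ on the left and $c'_\beta r$ on the right, while the remaining terms $-(\bar{y}_i - \lfloor\bar{y}_i\rfloor - 1)y_i$ and $-(\bar{y}_i - \lfloor\bar{y}_i\rfloor - 1)\lfloor\bar{y}_i\rfloor$ do not involve $r$ at all. Subtracting the \ref{fgmi} cut with parameter $r$ from the one with parameter $r'$ therefore yields exactly
\[
y'A_\beta (r'-r) \leq c'_\beta (r'-r).
\]
Since $r'-r \geq \mathbf{0}$, this is a nonnegative combination of the columns $y'(A_\beta)_{\cdot k} \leq c_{\beta_k}$, each of which is one of the constraints $y'A \leq c'$ defining D. Hence adding this nonnegative combination to the $r$-cut recovers the $r'$-cut, showing that the $r$-version of \ref{fgmi}, together with feasibility in D, implies the $r'$-version.

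I do not foresee a serious obstacle here; the argument is essentially a linearity-plus-monotonicity observation. The only subtlety worth flagging in the write-up is the precise sense of ``dominates'': the $r'$-cut is not literally a weaker scalar inequality in isolation, but it is implied by the $r$-cut in the presence of the constraints $y'A_\beta \leq c'_\beta$ that are already valid for D, and this is the appropriate notion of dominance for cutting planes.
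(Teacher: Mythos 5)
Your proposal is correct and takes essentially the same route as the paper: the paper likewise isolates the linear $r$-dependence by rewriting \ref{fgmi} as $(c'_\beta - y'A_\beta)r \geq (\bar{y}_i - \lfloor\bar{y}_i\rfloor - 1)(\lfloor\bar{y}_i\rfloor - y_i)$ and uses $c'_\beta - y'A_\beta \geq 0$ on the feasible region of D, which is the same monotonicity observation as your ``difference of cuts is a nonnegative combination of the constraints'' step. Your explicit check that the prescribed $r$ satisfies \ref{cut_condition} and your remark on the precise sense of dominance are fine additions but not departures.
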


\begin{proof}
%We rewrite \ref{fgmi} a few times, until the result emerges.
%\[
%\left((\bar{y}_i - \lfloor \bar{y}_i \rfloor) - (1 +\alpha'_i r)\right)
%\left(y_i - \lfloor\bar{y}_i\rfloor\right)
%\geq
%\sum_{j:j\not= i} (\alpha'_j r) y_j  -
%\bar{y}'A_\beta r+(\alpha'_i r) \lfloor \bar{y}_i \rfloor.
%\]
%\begin{multline*}
%\left((\bar{y}_i - \lfloor \bar{y}_i \rfloor) - (1 +\alpha'_i r)\right) y_i
%-\sum_{j:j\not= i} (\alpha'_j r) y_j \\
%\geq
%-\bar{y}'A_\beta r+(\alpha'_i r) \lfloor \bar{y}_i \rfloor
%+\left(\bar{y}_i - \lfloor\bar{y}_i\rfloor)-(1+\alpha'_i r)\right)\lfloor \bar{y}_i \rfloor.
%\end{multline*}
%\[
%\left((\bar{y}_i - \lfloor \bar{y}_i \rfloor) - (1 +\alpha'_i r)\right) y_i
%-\sum_{j:j\not= i} (\alpha'_j r) y_j
%\geq
% -\sum_j (\alpha'_j r) \bar{y}_j
%+\left(\bar{y}_i - \lfloor\bar{y}_i\rfloor - 1\right)\lfloor \bar{y}_i \rfloor.
%\]
%\[
%\left((\bar{y}_i - \lfloor \bar{y}_i \rfloor) - (1 +\alpha'_i r)\right) y_i
%+\sum_j(\alpha'_j r) (\bar{y}_j-y_j)+(\alpha'_i r)y_i
%\geq
%\left(\bar{y}_i - \lfloor\bar{y}_i\rfloor - 1\right)\lfloor \bar{y}_i \rfloor.
%\]
%\[
%(\bar{y}_i - \lfloor \bar{y}_i \rfloor -1)y_i
%+\sum_j(\alpha'_j r) (\bar{y}_j-y_j)
%\geq
%\left(\bar{y}_i - \lfloor\bar{y}_i\rfloor - 1\right)\lfloor \bar{y}_i \rfloor.
%\]
%\[
%\sum_j(\alpha'_j r) (\bar{y}_j-y_j)
%\geq
%\left(\bar{y}_i - \lfloor\bar{y}_i\rfloor - 1\right)(\lfloor \bar{y}_i\rfloor -y_i).
%\]
%\[
%(\bar{y}-y)'A_\beta r
%\geq
%\left(\bar{y}_i - \lfloor\bar{y}_i\rfloor - 1\right)(\lfloor \bar{y}_i\rfloor -y_i).
%\]
We simply rewrite  \ref{fgmi} as
\[
(c'_\beta - y'A_\beta) r
\geq
\left(\bar{y}_i - \lfloor\bar{y}_i\rfloor - 1\right)(\lfloor \bar{y}_i\rfloor -y_i).
\]
Observing that $c'_\beta - y'A_\beta\geq 0$ for $y$ that are feasible for
D, we see that the tightest inequality of this type, satisfying \ref{cut_condition},  arises by choosing a minimal $r$. The result follows.
\end{proof}

\section{A finitely-converging algorithm}\label{sec:fca}

\subsection{Amended set-up}\label{sec:fca:setup}

%This very short subsection is practically verbatim from \cite{HeLee2015}.
%We include it here so that this presentation will be self contained.

To make a finitely-converging algorithm,
we amend our set-up a bit:
\begin{itemize}
\item[(i)] without loss of generality, we assume that $\mathcal{I}=\{1,2,\ldots,|\mathcal{I}|\}$;
\item[(ii)] we assume that the objective vector $b$ is integer and that the optimal
value of $\mbox{D}_\mathcal{I}$ is an integer, and we
move the objective function to the constraints, introducing a new variable integer-constrained variable, $y_0$, indexed first;
\item[(iii)] \emph{after this}, we lexicographically perturb the resulting objective
function.
\end{itemize}

\begin{note}\label{note:intval}
Regarding ii, the hypothesis that
that the optimal
value of $\mbox{D}_\mathcal{I}$ is an integer,
we could achieve this by: (a) simply assuming it, (b) scaling $b$ up appropriately,
or (c) assuming that $b_i=0$ for $i\not\in\mathcal{I}$.
\end{note}

In any case, we arrive at
\[
\tag*{($\mbox{D}^{\epsilon}_\mathcal{I}$)}
\begin{array}{rlclcl}
 \max & y_{0} & +    & y'\vec{\epsilon}_{\scriptscriptstyle[1,m]}    &      &   \\
      & y_{0} & -    & y'b                 & \leq & 0;\\
      &       &      & y'A                 & \leq & c'; \\
      & \multicolumn{3}{l}{y_{0}  \in  \mathbb{Z};}  &      &     \\
%      &       &      & \multicolumn{3}{l}{y  \in  \mathbb{Z}^m,}
      &        &     &  y_i  & \in & \mathbb{Z}, \mbox{ for $i\in\mathcal{I}$,}
\end{array}
\]
where  $\vec{\epsilon}_{\scriptscriptstyle[i,j]}:=(\epsilon^i,\epsilon^{i+1},...,\epsilon^{j})'$, and $\epsilon$ is
treated as an arbitrarily small positive \emph{indeterminate} --- we wish to emphasize that
we do not give $\epsilon$ a real value, rather we incorporate it symbolically.
We note that if $(y_0,y')$ is optimal for
$\mbox{D}^{\epsilon}_\mathcal{I}$, then $y$ is a lexically-maximum solution of
$\mbox{D}_\mathcal{I}$; that is, $y$ is optimal for $\mbox{D}_\mathcal{I}$,
and it is lexically maximum (among all optimal solutions)
under the total ordering of basic dual solutions induced by $\sum_{i=1}^m \epsilon^i y_i$.

The dual of the continuous relaxation of $\mbox{D}^{\epsilon}_\mathcal{I}$ is the rhs-perturbed
primal problem
\[
\tag*{($\mbox{P}^{\epsilon}$)}
\begin{array}{rrcrcl}
 \min &       &      & c'x    &      &   \\
      & x_0   &      &        & =    & 1;\\
      &-bx_0  & +    & Ax     & =    & \vec{\epsilon}_{\scriptscriptstyle[1,m]}; \\
      & \multicolumn{3}{l}{x_0  \geq  0;}  &      &     \\
      &       &      & \multicolumn{3}{l}{x  \geq  \mathbf{0}.}
\end{array}
\]

Next, we observe that $\mbox{D}^{\epsilon}_\mathcal{I}$
is a special case of
\[
\tag*{($\mbox{lex-D}_\mathcal{I}$)}
\begin{array}{rlcl}
z:= \max & y'\vec{\epsilon}_{\scriptscriptstyle[0,m-1]}   &      &   \\
      &  y'A  &   \leq  & c'; \\
      &  y_i  & \in & \mathbb{Z}, \mbox{ for $i\in\mathcal{I}$,}
\end{array}
\]
which has as the dual of its continuous relaxation the rhs-perturbed primal problem
\[
\tag*{($\mbox{lex-P}$)}
\begin{array}{rrcl}
 \min & c'x  &      &   \\
      &  Ax  &   =  &  \vec{\epsilon}_{\scriptscriptstyle[0,m-1]}; \\
      &   x  & \geq & \mathbf{0}.
\end{array}
\]

Note that we again have $A\in \mathbb{Z}^{m\times n}$, $c\in \mathbb{Z}^n$,
and $\mathcal{I}=\{1,2,\ldots,|\mathcal{I}|\}$.
In what follows, we focus on $\mbox{lex-D}_\mathcal{I}$ and $\mbox{lex-P}$.

\subsection{First pivot after a new column}\label{sec:fca:first}

\begin{lemma}
If we derive a column from an $i$ for which $\bar{y}_i$ is fractional, append this column to $\mbox{lex-P}$,
and then make a single primal-simplex pivot, say with the $l$-th basic variable leaving the basis,
then after the pivot the new dual solution is
\[
\bar{\bar{y}}=  \bar{y} +
\frac{ (\bar{y}_i - \lfloor\bar{y}_i\rfloor -1)(\bar{y}_i - \lfloor\bar{y}_i\rfloor) }
{r_l - (\bar{y}_i - \lfloor\bar{y}_i\rfloor -1)h_{li}}
h_{l\cdot}~,
\]
where $h_{l\cdot}$ is the $l$-th row of $A_{\beta}^{-1}$.
\end{lemma}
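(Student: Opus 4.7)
The plan is to reduce the lemma to the standard dual-update formula for one primal-simplex pivot, and then substitute the quantities already computed immediately above the statement.

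First I would recall (or rederive in a line via Sherman--Morrison) the following fact: if a primal-simplex pivot brings a new column $a_q$ with cost $c_q$ and tableau form $\eta := A_\beta^{-1} a_q$ into the basis in place of the $l$-th basic column, then the new dual basic solution is
\[
\bar{\bar y}' \;=\; \bar y' + \frac{\bar c_q}{\eta_l}\, h_{l\cdot},
\]
where $\bar c_q = c_q - \bar y' a_q$ and $h_{l\cdot}$ is the $l$-th row of $A_\beta^{-1}$. The quickest derivation writes $A_{\beta'} = A_\beta + (a_q - A_\beta e_l) e_l'$, applies Sherman--Morrison to get $A_{\beta'}^{-1} = A_\beta^{-1} - \frac{1}{\eta_l}(\eta - e_l)\,h_{l\cdot}$, and multiplies by $c'_{\beta'} = c'_\beta + (c_q - c_{\beta(l)})e_l'$; the terms involving $c_{\beta(l)}$ cancel and the remaining expression collapses to $\bar c_q/\eta_l$.

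Next I would read off the ingredients for our appended F-GMI column directly from the computations preceding the lemma. The tableau form of the column is $\eta = r - (\bar y_i - \lfloor\bar y_i\rfloor - 1)\,h_{\cdot i}$, so in particular $\eta_l = r_l - (\bar y_i - \lfloor\bar y_i\rfloor - 1)\,h_{li}$. The reduced cost has just been computed in the preceding proposition to be $\bar c_q = (\bar y_i - \lfloor\bar y_i\rfloor - 1)(\bar y_i - \lfloor\bar y_i\rfloor)$. Substituting these two expressions into the dual-update formula produces exactly the claimed form of $\bar{\bar y}$, which finishes the proof.

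There is no real obstacle here: every quantity appearing in the conclusion has already been assembled in \S\ref{sec:fancy}. The only care required is bookkeeping when invoking Sherman--Morrison (which can be skipped by citing the standard primal-simplex dual-update formula), and making sure signs and indices line up when plugging in under the ``$\min c'x$'' convention used by lex-P.
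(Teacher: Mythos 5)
Your proof is correct and takes essentially the same route as the paper: the paper also writes $\bar{\bar{y}}=\bar{y}+\Delta\, h_{l\cdot}$ and determines $\Delta$ by forcing the reduced cost of the entering column, $(\bar{y}_i-\lfloor\bar{y}_i\rfloor-1)(\bar{y}_i-\lfloor\bar{y}_i\rfloor)$, to become zero against the pivot element $r_l-(\bar{y}_i-\lfloor\bar{y}_i\rfloor-1)h_{li}$, which is exactly your dual-update formula $\Delta=\bar{c}_q/\eta_l$. Your Sherman--Morrison derivation is just a more explicit justification of the same ``basic simplex-algorithm'' fact the paper invokes.
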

\begin{proof}
This is basic simplex-algorithm stuff. $\bar{\bar{y}}$ is
just $\bar{y}$ plus a multiple $\Delta$ of the $l$-th row of $A_\beta^{-1}$.
The  reduced cost of the entering variable, which starts
at $ \left(\bar{y}_i - \lfloor\bar{y}_i\rfloor - 1\right)\left(\bar{y}_i-\lfloor\bar{y}_i\rfloor\right)$
 will become zero (because it becomes basic) after the
pivot. So
\[
 \left(\bar{y}_i - \lfloor\bar{y}_i\rfloor - 1\right)\left(\bar{y}_i-\lfloor\bar{y}_i\rfloor\right) - \Delta \left( r_l - (\bar{y}_i - \lfloor\bar{y}_i\rfloor -1)h_{li} \right) =0,
\]
which implies that
\[
\Delta = \frac{\left(\bar{y}_i - \lfloor\bar{y}_i\rfloor - 1\right)\left(\bar{y}_i-\lfloor\bar{y}_i\rfloor\right)}{ r_l - (\bar{y}_i - \lfloor\bar{y}_i\rfloor -1)h_{li}}.
\]
\end{proof}

\begin{prop}\label{cor:minimal_step}
If we derive a column with respect to an $i$ for which $\bar{y}_i$ is fractional, choosing $r$
to be a minimal vector satisfying \ref{cut_condition},
append this column to $\mbox{lex-P}$,
and then make a single primal-simplex pivot,
then after the pivot, either $(\bar{\bar{y}}_1,\ldots,\bar{\bar{y}}_{i-1})$ is a lexical decrease
relative to $(\bar{y}_1,\ldots,\bar{y}_{i-1})$
or $\bar{\bar{y}}_i\leq \lfloor\bar{y}_i\rfloor$.
\end{prop}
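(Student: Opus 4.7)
My plan is to combine two standard facts about the lexicographic primal simplex method. First, one nondegenerate pivot on lex-P strictly decreases the dual objective $\bar y'\vec{\epsilon}_{[0,m-1]} = \sum_{k=1}^m \bar y_k \epsilon^{k-1}$ in the lexicographic order on polynomials in $\epsilon$: the reduced cost of the new column is the negative scalar $(\bar y_i - \lfloor \bar y_i\rfloor - 1)(\bar y_i - \lfloor\bar y_i\rfloor)$, and the primal step is lex-positive because the $\epsilon$-perturbation keeps every basic value nonzero as a polynomial. Second, at a lex-feasible basis every row $h_{l\cdot}$ of $A_\beta^{-1}$ is lex-positive, i.e.\ its first nonzero entry is strictly positive, since $\bar x_{\beta(l)} = h_{l\cdot}'\vec{\epsilon}_{[0,m-1]}$ is a nonzero, lex-nonnegative polynomial.

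Starting from the previous lemma, I would write $\bar{\bar y} - \bar y = \tau\, h_{l\cdot}$ with $\tau<0$ (the numerator $(\bar y_i - \lfloor\bar y_i\rfloor - 1)(\bar y_i - \lfloor\bar y_i\rfloor)$ is negative, and the denominator is the positive pivot element forced by the ratio test), and let $j^\star$ be the smallest index with $h_{lj^\star}\neq 0$. By the second fact, $h_{lj^\star}>0$, so $\tau h_{lj^\star}<0$ while $\bar{\bar y}_k = \bar y_k$ for all $k<j^\star$. If $j^\star \leq i-1$, then $(\bar{\bar y}_1,\dots,\bar{\bar y}_{i-1})$ first differs from $(\bar y_1,\dots,\bar y_{i-1})$ at position $j^\star$ and is strictly smaller there, which is the first alternative of the proposition.

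The remaining case is $j^\star\geq i$. I would rule out $j^\star>i$ as follows: if $h_{li}=0$, the pivot element $r_l - (\bar y_i - \lfloor\bar y_i\rfloor -1)h_{li}$ collapses to $r_l$, and Proposition~\ref{prop:minimal} gives $r_l = \max\{0,-\lfloor 0\rfloor\} = 0$, making the pivot element vanish, so the ratio test could not have selected row $l$. Hence $j^\star=i$, so $h_{li}>0$, and Proposition~\ref{prop:minimal} again yields $r_l=0$. Plugging $r_l=0$ into the formula for $\tau$ collapses the product to $\tau h_{li} = -(\bar y_i - \lfloor\bar y_i\rfloor)$, so $\bar{\bar y}_i = \bar y_i - (\bar y_i - \lfloor\bar y_i\rfloor) = \lfloor\bar y_i\rfloor$, matching the second alternative.

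The step I expect to be the main obstacle is recognizing the double role played by the minimality from Proposition~\ref{prop:minimal}: once -- through $r_l=0$ when $h_{li}=0$ -- to exclude the subcase $j^\star>i$ via the ratio test; and once -- through $r_l=0$ when $h_{li}>0$ -- to make the algebra in the surviving subcase $j^\star=i$ collapse to exactly the integer drop $-(\bar y_i - \lfloor\bar y_i\rfloor)$ at position $i$. Without minimality of $r$ the update $\tau h_{li}$ can easily be of the wrong sign or too small in magnitude, so identifying minimality as the hinge between the two alternatives is the conceptual heart of the proof.
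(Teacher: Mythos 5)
Your proof is correct and follows essentially the same route as the paper's: the lexicographic structure (lex-positivity of the rows of $A_\beta^{-1}$, equivalently the lexical decrease of the dual solution) forces $h_{li}\geq 0$ whenever the prefix $(\bar{y}_1,\ldots,\bar{y}_{i-1})$ does not lexically decrease, and minimality of $r$ then gives $r_l=0$, which collapses the update at position $i$. As a small bonus you obtain the sharper conclusion $\bar{\bar{y}}_i=\lfloor\bar{y}_i\rfloor$ and explicitly dispose of the subcase $h_{li}=0$ (where the pivot element would vanish, so row $l$ could not be selected), a point the paper leaves implicit.
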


\begin{proof}
A primal pivot implies that we observe the usual ratio test to maintain primal feasibility. This
amounts to choosing
\[
l :=  \argmin_{l ~:~ r_l - (\bar{y}_i - \lfloor\bar{y}_i\rfloor -1)h_{li} > 0} \left\{
\frac{h_{l\cdot} \vec{\epsilon}_{\scriptscriptstyle[0,m-1]}}{r_l - (\bar{y}_i - \lfloor\bar{y}_i\rfloor -1)h_{li}}
\right\}.
\]
Also, we have
\[
\bar{\bar{y}}_i=  \bar{y}_i + \frac{\overbrace{(\bar{y}_i - \lfloor\bar{y}_i\rfloor -1)(\bar{y}_i - \lfloor\bar{y}_i\rfloor) }^{<0}}{\underbrace{r_l - (\bar{y}_i - \lfloor\bar{y}_i\rfloor -1)h_{li}}_{>0}}h_{li}.
\]
Assume that  $(\bar{\bar{y}}_1,\ldots,\bar{\bar{y}}_{i-1})$ is not a lexical decrease
relative to $(\bar{y}_1,\ldots,\bar{y}_{i-1})$.
Because $\bar{\bar{y}}$ is lexically less than $\bar{y}$, we then must have  $h_{li}\geq 0$.
\[
\bar{\bar{y}}_i=  \bar{y}_i + \overbrace{( \lfloor\bar{y}_i\rfloor - \bar{y}_i)}^{<0}
\frac{\overbrace{-(\bar{y}_i - \lfloor\bar{y}_i\rfloor -1)h_{li} }^{\geq 0}}{\underbrace{r_l - (\bar{y}_i - \lfloor\bar{y}_i\rfloor -1)h_{li}}_{>0}}.
\]
If we can establish that
\begin{equation*}\label{star}
\tag{$*$}
\frac{-(\bar{y}_i - \lfloor\bar{y}_i\rfloor -1)h_{li} }{r_l - (\bar{y}_i - \lfloor\bar{y}_i\rfloor -1)h_{li}}~\geq~ 1,
\end{equation*}
then we can immediately conclude that $\bar{\bar{y}}_i\leq \lfloor\bar{y}_i\rfloor$.
Clearing the denominator, we see that \ref{star} is just the same as $r_l\leq 0$.
But, because we have $r\geq \mathbf{0}$, we see that we need $r_l=0$. Now,
we just observe that $r$ minimal means $r_l = \max \{0, -\lfloor h_{li} \rfloor\}$,
which is equal to zero because we have $h_{li}\geq 0$.
\end{proof}

\begin{obs}\label{obs:min}
In light of Propositions \ref{prop:minimal} and \ref{cor:minimal_step}, there is no clear incentive
to choose a non-minimal $r$ satisfying  \ref{cut_condition}. Still, we note that at any
iteration, we could allow any
choice of $r$ satisfying  \ref{cut_condition} and \ref{star}, and we would reach the
same conclusion as of Proposition \ref{cor:minimal_step}.
\end{obs}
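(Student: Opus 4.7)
The observation's first sentence is a summary comment rather than a formal claim: Proposition \ref{prop:minimal} shows that among all $r$ satisfying \ref{cut_condition}, the minimal one gives a \ref{fgmi} cut that dominates every other, and Proposition \ref{cor:minimal_step} shows that this same minimal $r$ already suffices to guarantee the desired lexical/round-down progress. So no separate argument is required for that sentence. The substantive content is the second sentence, and my plan is to re-examine the proof of Proposition \ref{cor:minimal_step} and isolate exactly where minimality of $r$ is used.

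Inspection of that proof shows that minimality enters at a single point, and only in order to deduce \ref{star}: given $h_{li}\geq 0$, minimality forces $r_l = \max\{0, -\lfloor h_{li}\rfloor\} = 0$, and $r_l \leq 0$ is equivalent to \ref{star}. Everything else — the ratio-test choice of the pivot row $l$, the formula for $\bar{\bar{y}}$ from the preceding lemma, the factoring
\[
\bar{\bar{y}}_i = \bar{y}_i + (\lfloor\bar{y}_i\rfloor - \bar{y}_i)\cdot
\frac{-(\bar{y}_i - \lfloor\bar{y}_i\rfloor -1)h_{li}}{r_l - (\bar{y}_i - \lfloor\bar{y}_i\rfloor -1)h_{li}},
\]
and the implication that $h_{li}\geq 0$ in the absence of a lexical decrease — uses only \ref{cut_condition} together with the fact that a primal-simplex pivot has been performed.

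Armed with this, the desired conclusion falls out immediately. If $(\bar{\bar{y}}_1,\ldots,\bar{\bar{y}}_{i-1})$ is a lexical decrease relative to $(\bar{y}_1,\ldots,\bar{y}_{i-1})$, we are done. Otherwise $h_{li}\geq 0$ and $\lfloor\bar{y}_i\rfloor - \bar{y}_i < 0$. By hypothesis, $r$ satisfies \ref{star}, which is exactly the statement that the fraction in the displayed formula is at least $1$; multiplying a strictly negative number by a quantity $\geq 1$ produces something no larger, so $\bar{\bar{y}}_i \leq \bar{y}_i + (\lfloor\bar{y}_i\rfloor - \bar{y}_i) = \lfloor\bar{y}_i\rfloor$. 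The sign bookkeeping uses only that the denominator $r_l - (\bar{y}_i - \lfloor\bar{y}_i\rfloor -1)h_{li}$ is positive, which is guaranteed by the ratio test.

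There is no real obstacle here: the observation is essentially the meta-remark that \ref{star} was the operative hypothesis all along inside the proof of Proposition \ref{cor:minimal_step}, with minimality of $r$ serving merely as one convenient sufficient condition for it.
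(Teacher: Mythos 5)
Your proposal is correct and matches the paper's intent exactly: the paper states this observation without proof, and the justification is precisely what you give — the proof of Proposition \ref{cor:minimal_step} uses minimality of $r$ only to deduce \ref{star} (via $r_l=0$ when $h_{li}\geq 0$), so assuming \ref{star} directly lets the rest of that argument go through unchanged. The only caveat worth noting (present already in the paper's own formulation) is that \ref{star} refers to the leaving index $l$ determined by the ratio test, so it is a condition that can only be verified after the column built from $r$ has been priced in.
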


\subsection{Algorithm and convergence proof}\label{sec:fca:alg_proof}

Next, we specify a finitely-converging algorithm for $\mbox{lex-D}_\mathcal{I}$.
We assume that the feasible region of the continuous relaxation $\mbox{D}$ of
$\mbox{D}_\mathcal{I}$  is nonempty and bounded.
Because of how we reformulate $\mbox{D}_\mathcal{I}$
as $\mbox{lex-D}_\mathcal{I}$, we have that the feasible region of
the associated continuous relaxation $\mbox{lex-D}$
is nonempty and bounded.
\medskip

\vbox{
\begin{center}
\underline{\bf Algorithm 1: Column-generation for mixed-integer linear optimization}
\end{center}
\begin{enumerate}
\item[(0)] Assume that the feasible region of $\mbox{lex-D}$
is nonempty and bounded. Assume further that the optimal value of $\mbox{D}_\mathcal{I}$ is an integer (see Note \ref{note:intval}).
Start with the basic optimal solution of $\mbox{lex-P}$ (obtained in any manner).
\item Let $\bar{y}$ be the associated dual basic solution. %\label{alg:ybar}
If $\bar{y}_i \in\mathbb{Z}$ for all $i\in\mathcal{I}$, then STOP: $\bar{y}$ solves $\mbox{lex-D}_\mathcal{I}$.\label{alg:STOP}
\item Otherwise, choose the \emph{minimum} $i\in\mathcal{I}$ for which $\bar{y}_i \notin\mathbb{Z}$. Related to this $i$,
construct a new variable (and associated column and objective coefficient) for $\mbox{lex-P}$
in the manner of \S\ref{sec:fancy}, choosing $r$ to be \emph{minimal} (but also see Observation \ref{obs:min} for a relaxed condition). Solve this new version of $\mbox{lex-P}$,
starting from the current (primal feasible) basis, employing the primal simplex algorithm.
\begin{enumerate}
\item If this new version of $\mbox{lex-P}$ is unbounded, then STOP: $\mbox{lex-D}_\mathcal{I}$ is infeasible.
\item Otherwise, GOTO step \ref{alg:STOP}. \label{alg:cut}
\end{enumerate}
\end{enumerate}
}

\begin{theorem}
Algorithm 1 terminates in a finite number of iterations with either an optimal solution of $\mbox{lex-D}_\mathcal{I}$
or a proof that $\mbox{lex-D}_\mathcal{I}$ is infeasible.
\end{theorem}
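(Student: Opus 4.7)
The plan is to combine two arguments: finiteness of each \emph{inner} primal-simplex re-solve, and finiteness of the \emph{outer} column-generation loop; correctness of the two STOP outcomes then follows routinely. For the inner phase, the key is that the right-hand side of $\mbox{lex-P}$ is the symbolic vector $\vec{\epsilon}_{\scriptscriptstyle[0,m-1]}$, which makes every basic feasible primal solution non-degenerate as a polynomial in $\epsilon$. Each primal-simplex pivot therefore strictly decreases the symbolic primal objective $c'x$, so after the new column is appended, the primal simplex either reaches a new optimal basis in finitely many pivots or detects unboundedness in finitely many pivots (triggering STOP at (a)).

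For the outer loop I argue by contradiction: suppose the algorithm never STOPs. By LP duality, the strict decrease of $c'x$ at each primal pivot is mirrored by a strict decrease of the dual objective $\bar{y}'\vec{\epsilon}_{\scriptscriptstyle[0,m-1]}$, which is precisely a strict lex decrease of the full vector $(\bar{y}_1,\ldots,\bar{y}_m)$. Proposition~\ref{cor:minimal_step} pins this down for the \emph{first} pivot after a new column is appended: writing $i_k$ for the minimum fractional index in $\mathcal{I}$ chosen at outer iteration $k$, either the prefix $(\bar{y}_1,\ldots,\bar{y}_{i_k-1})$ strictly lex-decreases, or it is unchanged and the $i_k$-th component falls to at most $\lfloor\bar{y}_{i_k}^{(k)}\rfloor$. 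A short argument then upgrades this dichotomy to the \emph{entire} re-solve: any subsequent pivots only further lex-decrease the full vector, so if the prefix has not already decreased they can only lex-non-increase it further and, in the case of equality, can only further decrease $\bar{y}_{i_k}$.

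Let $i^* := \liminf_{k} i_k$, which is well-defined because $\mathcal{I}$ is finite. For $k$ sufficiently large, $i_k \geq i^*$, and by the minimality rule defining $i_k$, the prefix $(\bar{y}_1^{(k)},\ldots,\bar{y}_{i^*-1}^{(k)})$ is an integer vector. This integer prefix is lex-non-increasing across outer iterations and confined to a bounded box (the feasible region of the continuous relaxation is bounded by hypothesis, and appending cuts only shrinks it), so it takes only finitely many distinct values and must stabilize from some iteration $K$ onward. For $k \geq K$ with $i_k = i^*$, the upgraded dichotomy forces $\bar{y}_{i^*}^{(k+1)} \leq \lfloor \bar{y}_{i^*}^{(k)} \rfloor$; for $k \geq K$ with $i_k > i^*$, $\bar{y}_{i^*}^{(k)}$ is an integer that cannot increase across the iteration. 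Hence between two successive iterations with $i_k = i^*$, the value $\lceil \bar{y}_{i^*} \rceil$ drops by at least $1$, contradicting boundedness combined with the assumption that $i_k = i^*$ infinitely often.

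The hardest step is the upgrade from Proposition~\ref{cor:minimal_step} (one pivot) to the ``entire re-solve'' statement, which requires careful bookkeeping of how the prefix and the $i_k$-th component evolve under possibly many further pivots; everything else is driven by symbolic non-degeneracy and by boundedness. Correctness of the STOP outcomes is then immediate: at STOP in step~1, $\bar{y}$ is integer-feasible for $\mbox{lex-D}_\mathcal{I}$ and optimal for a valid relaxation, hence optimal; at STOP in (a), unboundedness of the cut-augmented $\mbox{lex-P}$ implies its dual is infeasible, and validity of the cuts for $\mbox{lex-D}_\mathcal{I}$ passes the infeasibility down.
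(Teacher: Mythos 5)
Your proof is correct and follows essentially the same route as the paper's: finiteness of each re-solve via the lexicographic (symbolic $\epsilon$) non-degeneracy, Proposition~\ref{cor:minimal_step} upgraded from one pivot to a whole re-solve, stabilization of the integer prefix ahead of the least persistently-fractional index, and a contradiction with boundedness because $\bar{y}_{i^*}$ is forced past a fresh integer infinitely often. The paper merely packages the endgame slightly differently (extracting an infinite strictly decreasing subsequence of fractional values trapped between two consecutive integers, which must then lie within a single finite simplex run), but the ingredients and structure coincide.
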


\begin{proof}
It is clear from well-known facts about linear optimization that if the algorithm stops,
then the conclusions asserted by the algorithm are correct. So our task is to
demonstrate that the algorithm terminates in a finite number of iterations.

Consider the full sequence of dual solutions $\bar{y}^t$ ($t=1,2,\ldots$) visited during the algorithm.
We refer to every dual solution after every \emph{pivot} (of the primal-simplex algorithm), over all visits to step
\ref{alg:cut}.
This sequence is lexically decreasing at every (primal-simplex) pivot.
We claim that after a finite number of iterations of Algorithm 1,
$\bar{y}_k^t\in\mathbb{Z}$ for all $k\in\mathcal{I}$ upon reaching step \ref{alg:STOP}, whereupon the algorithm stops.
If not, let
$j$ be the least index in $\mathcal{I}$ for which $\bar{y}_j$ does not become and remain constant (and integer) after a finite number of pivots.
%(if there were no such $j$, then after a finite number of pivots all $\bar{y}_j$ would eventually remain constant,
%in which case it must be that the algorithm would stop at step \ref{alg:STOP} or the algorithm would proceed to
%step \ref{alg:cut} and the solution would lexically decrease).

 Choose an iteration $T$ where $\bar{y}^T$ of step \ref{alg:STOP} has
$\bar{y}^T_k$  constant (and integer) for all $k<j$ and all
 subsequent pivots.
 Consider the infinite (nonincreasing) sequence $\mathcal{S}_1:=\bar{y}^T_j, \bar{y}^{T+1}_j, \bar{y}^{T+2}_j,\cdots$.
 By the choice of $j$, this sequence has an infinite strictly decreasing subsequence $\mathcal{S}_2$.
 By the boundedness assumption, this subsequence has an infinite strictly decreasing subsequence $\mathcal{S}_3$
 of fractional values that are between some pair of successive integers.
 By Corollary \ref{cor:minimal_step}, between any two visits to step \ref{alg:STOP}
 with $\bar{y}_j$ fractional, there is at least one integer between these fractional values.
 Therefore,
 $\mathcal{S}_3$  corresponds to pivots in the same visit to step \ref{alg:cut}.
 But this contradicts the fact that the lexicographic primal simplex algorithm is finite.
\end{proof}

\begin{obs}
In step 2 of Algorithm 1,
we can additionally choose to add more columns, associated with 
any valid cuts for $\mbox{lex-D}_\mathcal{I}$,
%$\{ k\in\mathcal{I}
%~:~ k>i \mbox{ and } \bar{y}_k\notin\mathbb{Z}\}$, and for the construction
%of each of those columns, we can take any $r$ satisfying \ref{cut_condition}.
%As long as we always start the optimization of such
%a version of $\mbox{lex-P}$ by first pivoting $i$ into the current basis $\beta$, then the proof
%holds together, 
and we still get a finitely-converging algorithm.
\end{obs}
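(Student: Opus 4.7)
The plan is to show that each ingredient of the proof of Theorem 1 survives the augmentation of step 2 with additional columns derived from valid cuts for $\mbox{lex-D}_\mathcal{I}$.

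First, I would check \emph{correctness}: any column added corresponds to a valid cut, so the feasible region of $\mbox{lex-D}_\mathcal{I}$ is preserved, and the two stopping conditions (integrality of $\bar y$, respectively unboundedness of $\mbox{lex-P}$) retain their intended interpretations.

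Next, I would verify the three quantitative ingredients used in the contradiction argument of Theorem 1. \emph{Lexical monotonicity} of the dual sequence after every primal-simplex pivot is a property of the lexicographic primal simplex on any LP, so it is unaffected by enlarging the column set. \emph{Finiteness within a single call to step \ref{alg:cut}} is immediate, because the enlarged LP is still a fixed LP and the lexicographic primal simplex terminates on it. The delicate point is preserving the conclusion of Proposition \ref{cor:minimal_step}: that conclusion is obtained by algebraic analysis of \emph{a single pivot in which the F-GMI column enters the basis}, and it is precisely what guarantees that between two visits to step \ref{alg:STOP} with $\bar y_j$ fractional, $\bar y_j$ must cross an integer.

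To handle this delicate point, I would stipulate that whenever step \ref{alg:cut} is executed, the F-GMI column be chosen as the first entering variable; this is always possible since it has strictly negative reduced cost upon being appended. Proposition \ref{cor:minimal_step} then applies verbatim after that first pivot, giving either a strict lexical decrease in the earlier coordinates or $\bar{\bar y}_j \leq \lfloor \bar y_j \rfloor$. Subsequent pivots performed on any of the additional columns (or on the F-GMI column again), being lexically monotone, can only preserve this conclusion by the time we next return to step \ref{alg:STOP}.

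With these four items in place, the contradiction step of the proof of Theorem 1 carries over verbatim: an infinite strictly decreasing subsequence $\mathcal{S}_3$ of fractional values of $\bar y_j$ lying in a single interval between consecutive integers cannot correspond to distinct visits to step \ref{alg:STOP}, and hence must come from pivots of a single call to step \ref{alg:cut}, contradicting finiteness of the lexicographic primal simplex there. The main obstacle is exactly the pivoting-order stipulation just described; once we commit to it (or to any analogous device that forces the F-GMI-triggered drop during the first pivot), nothing else in the original proof needs alteration.
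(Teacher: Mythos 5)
The paper states this Observation without any proof at all, so there is nothing to compare against line by line; what can be said is that your argument is correct and supplies exactly the justification the authors leave implicit. Your inventory of what must survive the augmentation (validity of the appended columns, lexical monotonicity of the dual sequence under the perturbed right-hand side, finiteness of each individual call to the simplex method, and the integer-crossing conclusion of Proposition~\ref{cor:minimal_step}) matches the skeleton of the proof of the Theorem, and your check that the disjunction ``prefix lexically decreases or $\bar{\bar y}_j\le\lfloor\bar y_j\rfloor$'' is preserved by the remaining (lexically decreasing) pivots of the same call is the right way to propagate Proposition~\ref{cor:minimal_step} to the next visit to step~\ref{alg:STOP}. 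The one point where you go beyond the paper is also the most valuable one: you observe that Proposition~\ref{cor:minimal_step} analyzes the pivot in which the \ref{fgmi} column \emph{enters}, and that once several appended columns have negative reduced cost this is no longer automatically the first pivot, so you stipulate that the \ref{fgmi} column be the first entering variable (legitimate, since Proposition~7 gives it strictly negative reduced cost, and the ratio test is unaffected by the presence of other nonbasic columns). This is a genuine and, as far as I can see, necessary refinement: if an arbitrary valid-cut column were pivoted in first, the \ref{fgmi} column could acquire nonnegative reduced cost without ever entering, $\bar y_j$ could shrink by an arbitrarily small fractional amount within the same unit interval, and the contradiction built on the subsequence $\mathcal{S}_3$ would no longer be available. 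So read literally, the Observation needs either your pivoting-order convention or some equivalent device, and your proposal correctly isolates and repairs that gap.
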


\section*{Acknowledgements}
The research of J. Lee was partially supported by NSF grant CMMI--1160915, ONR grant N00014-14-1-0315, the Simons Foundation, and Alpen-Adria-Universit\"at Klagenfurt.
Both authors gratefully acknowledge Mathematisches Forschungsinstitut Oberwolfach for additional support.

%%-----------------------------
\bibliographystyle{amsplain}
\bibliography{Gomory_Mixed}
%%-----------------------------
\end{document}